\documentclass[12pt]{amsart}

\usepackage{amsfonts, amsmath, amssymb, amsthm}
\usepackage{changebar}


\newtheorem{lemma}{Lemma}
\newtheorem{theorem}{Theorem}

\newtheorem{proposition}{Proposition}

\newcommand{\nth}[1]{$#1 {\rm - th }$}

\newcommand{\ord}{{\rm ord} \ }

\newcommand{\zprk}{\Z_p\hbox{-rk}}
\newcommand{\zrk}{\Z\hbox{-rk} }

\newcommand{\Z}{\mathbb{Z}}
\newcommand{\ZM}[1]{\Z /( #1 \cdot \Z)}
\newcommand{\ZMs}[1]{(\Z / #1 \cdot \Z)^*}

\newcommand{\rg}[1]{\mbox{\bf #1}}
\newcommand{\eu}[1]{\mathfrak{#1}}
\newcommand{\id}[1]{\mathcal{#1}}
\newcommand{\Gal}{\mbox{ Gal }}
\newcommand{\Gals}{\mbox{\small Gal }}
\newcommand{\Ker}{\mbox{ Ker }}

\newcommand{\prk}{p - {\rm rank } \ }

\newcommand{\rf}[1]{(\ref{#1})}

\newcommand{\Norm}{\mbox{\bf N}}

\newcommand{\F}{\mathbb{F}}

\newcommand{\K}{\mathbb{K}}

\newcommand{\KH}{\mathbb{H}}

\newcommand{\KL}{\mathbb{L}}

\newcommand{\Q}{\mathbb{Q}}

\newcommand{\N}{\mathbb{N}}

\def\ra{\rightarrow}

 \newcommand{\ran}{\rangle}
\newcommand{\lan}{\langle}

\begin{document}
{\obeylines \small
\vspace*{-1.0cm}
\hspace*{3.5cm}Like a bird on the wire,
\hspace*{3.5cm}Like a drunk in a midnight choir
\hspace*{3.5cm}I have tried in my way to be free\footnote{Leonard Cohen: {\em Bird on the wire}.}.
\vspace*{0.4cm}
\hspace*{5.0cm} {\it To Stephen Smale, at his $80$ - th birthday}
\vspace*{0.5cm}
\smallskip
}
\title[Washington Heuristics] {Turning Washington's heuristics in
  favor of Vandiver's conjecture} 
\author{Preda Mih\u{a}ilescu}
\address[P. Mih\u{a}ilescu]{Mathematisches Institut der Universit\"at
  G\"ottingen} \email[P. Mih\u{a}ilescu]{preda@uni-math.gwdg.de}

\date{Version 2.0 \today}
\vspace{1.0cm}
\begin{abstract}
  A famous conjecture bearing the name of Vandiver states that $h_p^+
  = 1$ in the $p$ - cyclotomic extension of $\Q$. Heuristics arguments
  of Washington, which have been briefly exposed in \cite{La}, p. 261 and
\cite{Wa}, p. 158 suggest that the Vandiver conjecture should be
  false, if certain conditions of statistical independence are
  fulfilled. In this note we assume that Greenberg's conjecture is
  true for the \nth{p} cyclotomic extensions and prove an elementary
  consequence of the assumption that Vandiver's conjecture fails for a
  certain value of $p$: the result indicates that there are deep
  correlations between this fact and the defect $\lambda^- i(p)$,
  where $i(p)$ is like usual the irregularity index of $p$, i.e. the
  number of Bernoulli numbers $B_{2k} \equiv 0 \bmod p, 1 < k <
  (p-1)/2$. As a consequence, if one combines the various assumptions 
in Washington's heuristics, these turn, on base of the present result, into
an argument in favor of the Vandiver's conjecture.
\end{abstract}
\maketitle

\section{Introduction}
Let $p$ be an odd prime and $\K = \Q[ \zeta ]$ be the \nth{p}
cyclotomic field and $G = \Gal(\K/\Q)$. If $X$ is a finite abelian
group, we denote by $X_p$ its $p$ - Sylow group; let $A =
\id{C}(\K)_p$, the $p$ - Sylow subgroup of the class group
$\id{C}(\K)$ and $h^+, h^-$ the sizes of $A^+$ respectively $A^-$. In
a letter to Kronecker from 1857, Kummer refers to $p \nmid h^+$ as a
\textit{noch zu beweisender Satz}, a theorem yet to prove (see also
\cite{Wa}, p. 158). The fact was stated later as a conjecture by
Vandiver.

In \cite{La}, p. 261 Washington gives an heuristic argument which
suggests that there might be an asymptotic amount of $O(\log \log(N))$
of primes $p \leq N$ for which $\lambda(A_{\infty}^-) = i(p) +1$,
where $i(p)$ is the irregularity index of $p$, i.e. the number of
Bernoulli numbers $B_{2k}, 1 < k < (p-1)/2$ that vanish modulo $p$. In
\cite{Wa}, p.158, Washington starts with a naive argument, on base of
which the cyclotomic unit $\eta_{2k} := e_{2k} (1-\zeta)^{\sigma-1}$
(see below for the definition of the idempotents $e_j \in \F_p[ G ]$)
may be a \nth{p} power with probability $1/p$: this yields a
probability of more than one half, for the failure of Vandiver's
conjecture, so the argument is obviously too crude. Washington
considers then that the probabilities that a Bernoulli number vanishes
modulo $p$ and the one that the corresponding cyclotomic unit
$\eta_{2k}$ is a $p$-power are independent: this heuristic leads to a
frequence of $O(\log \log(n))$ primes $p < n$ for which the conjecture
fails.  As a consequence, various specialists in the field expect that
the conjecture should not always hold.  Our result in this note, shows
that if Vandiver's conjecture fails, then one has the additional
condition $\lambda^- > i(p)$. If one considers this condition also as
statistically independent (!) from the two conditions in Washington's
heuristiscs, then the same argument suggest that there may be $O(1)$
primes $p < n$ for which Vandiver's conjecture fails, thus possibly
none. \textit{Therefore the elementary result in this note may be
  understood as one that turns Washington's heuristics into an
  argument in favor of Vandiver's conjecture}.

In this paper we prove an elementary fact, which implies that the
failure of Vandiver's conjecture has an impact on the value of
$\lambda$, and thus two events which were supposed to be uncorrelated
in the heuristic approach: namely $h_p^+ \neq 0$ and $\lambda^- >
i(p)$ are not independent. No direct consequence can be drawn as to
the truth of the Kummer - Vandiver conjecture; however we have an
explicite theorem which indicates an unknown dependence, and also a
method of investigation which may be extended for the purpose of
investigating more possible consequences of the assumption that the
Kummer-Vandiver conjecture is false.

The result of this paper is the following:
\begin{theorem}
\label{vafal}
Let $p$ be an odd prime with irregularity index $i(p) = 1$. If $h_p^+
> 1$, then $\lambda^- \geq 2$.
\end{theorem}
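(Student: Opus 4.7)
The plan is to set up the Iwasawa theoretic framework and reduce the theorem to a vanishing-order statement for a $p$-adic $L$-function. Let $\K_\infty/\K$ be the cyclotomic $\Z_p$-extension, $\Lambda = \Z_p[[T]]$, and decompose the Iwasawa module $X^- = \varprojlim A_n^-$ into isotypic components under the characters of $G$. The hypothesis $i(p) = 1$ combined with Herbrand-Ribet forces $A_1^-(\chi') = 0$ for every odd character $\chi'$ other than $\chi := \omega^{1-2k}$, where $k$ is the unique index with $p \mid B_{2k}$; Nakayama's lemma then gives $X^-(\chi') = 0$ for all such $\chi'$. Greenberg's conjecture yields $\lambda^+ = 0$, so $\lambda^- = \lambda_\chi$, and the problem reduces to showing $\lambda_\chi \geq 2$ under the assumption $h_p^+ > 1$.

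Next, I would apply Leopoldt's Spiegelungssatz at the first layer: any nontrivial $A_1^+(\omega^{2j})$ reflects into a nontrivial $A_1^-(\omega^{1-2j})$, which by Herbrand-Ribet would force $p \mid B_{2j}$. Because $i(p) = 1$, this confines the failure of Vandiver to the component $\overline\chi := \omega^{2k}$, so the hypothesis refines to $A_1^+(\overline\chi) \neq 0$, and Greenberg's conjecture ensures that $X^+(\overline\chi)$ is nonzero and finite.

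The decisive input is the Mazur-Wiles main conjecture, which identifies the characteristic ideal of $X^-(\chi)$ with the ideal generated by a distinguished series $g_{\overline\chi}(T) \in \Lambda$ attached to $L_p(s,\overline\chi)$; by Ferrero-Washington $\lambda_\chi = \deg g_{\overline\chi}$. A first zero $g_{\overline\chi}(0) \equiv 0 \pmod p$ is already forced by $p \mid B_{2k}$. To exhibit a second zero, I would invoke Iwasawa's fundamental exact sequence relating $C_\infty, E_\infty, U_\infty$ and $X^-$: the $\overline\chi$-component of $E_\infty/C_\infty$ is governed by $X^+(\overline\chi)$, which is nontrivial by the Vandiver failure and finite by Greenberg. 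Matching this with the Main Conjecture should transfer the nontrivial finite contribution into an extra factor of $T$ in $g_{\overline\chi}(T)$, producing $\lambda_\chi \geq 2$ as desired.

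The main obstacle is precisely this last bookkeeping: converting the qualitative input ``$X^+(\overline\chi)$ is nonzero and finite'' into the quantitative conclusion ``$T^2 \mid g_{\overline\chi}(T)$.'' One must rule out the coincidence that the lone zero already forced by $p \mid B_{2k}$ also absorbs the contribution of the Vandiver failure, so that the two sources of vanishing genuinely pile up rather than overlap. The finiteness of $X^+(\overline\chi)$ granted by Greenberg is exactly what prevents pseudo-null cancellations between the plus and minus characteristic ideals and allows the ambiguous class in $A_1^+(\overline\chi)$ to be exhibited as an independent source of vanishing for $L_p(s,\overline\chi)$; making this independence explicit is where the real technical work of the proof must lie.
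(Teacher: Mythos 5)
Your reduction to showing $\lambda_\chi \geq 2$ for the single odd component $\chi = \omega^{1-2k}$ (via Herbrand--Ribet, reflection, and Greenberg) parallels the paper's framing, but you have correctly flagged the decisive step as undone, and that step is not a matter of ``bookkeeping'' — it is where the entire difficulty lives, and the mechanism you propose cannot close it.

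The obstruction is structural. Greenberg's conjecture makes $X^+(\overline\chi)$ finite, hence pseudo-null over $\Lambda$. Characteristic ideals — and the Mazur--Wiles main conjecture that identifies them with $p$-adic $L$-functions — are by construction blind to pseudo-null modules: $\text{char}_\Lambda\bigl(X^+(\overline\chi)\bigr) = (1)$ whether or not $X^+(\overline\chi)$ vanishes, and likewise for $(E_\infty/C_\infty)(\overline\chi)$. Feeding the nontriviality of a finite module through the Iwasawa exact sequence and taking characteristic ideals therefore yields exactly no information about $g_{\overline\chi}(T)$; the ``extra factor of $T$'' you want will never appear at the level of $\text{char}_\Lambda$. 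Your closing remark that finiteness ``prevents pseudo-null cancellations'' in fact has the sign reversed: pseudo-nullity is precisely why the Vandiver-failure class is invisible to this bookkeeping. Converting the existence of a nonzero finite $X^+(\overline\chi)$ into a second zero of the $p$-adic $L$-function would require a finer invariant (Fitting ideals, Iwasawa adjoints, or a direct finite-level argument), and that is an argument you have not supplied.

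The paper's own proof avoids the main conjecture entirely and works at finite level, which is where the finite module is visible. Assuming $\lambda^- = 1$, reflection forces $A_n^+$ cyclic, so the plus Hilbert class field $\KH_n^+/\K_n$ is cyclic unramified and the folklore Lemma~\ref{nohil} gives $A(\KH_n^+) = 1$. Iwasawa's identification of $H^1(\Gal(\KL/\F), E(\KL))$ with ambiguous ideal classes, combined with the Herbrand quotient for this cyclic extension, yields Lemma~\ref{h0v} (norm surjectivity on real units) and Proposition~\ref{metacyc} (the primes above $p$ become $p$-principal in $\KH_n^+$). The contradiction then comes from matching the $H^0$ identity \rf{heq} against the nontrivial capitulation kernel that Greenberg's conjecture forces in the tower. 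Capitulation is exactly the finite-level phenomenon your characteristic-ideal approach discards, which is why the paper's elementary class-field-theoretic route succeeds where the main-conjecture route stalls.
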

Since $\lambda^- > 1$ is an implication of $h_p^+ > 1$, the two events
cannot be considered as independent events, each one with probability
$1/p$. But this implication can also suggest that the probability that
$\eta_{2k}$ is a \nth{p} power has rather the probability $1/p^2$ than
$1/p$, since it implies the vanishing of a higher order Bernoulli
number. Either way, we consider that our elementary result should
suggest that it is worthwhile to consider that Vandiver's conjecture
might be true, and pursue the investigation for the reasons why this
may be the case. For this purpose, the central idea of our proof can
be extended, with additional detail, to the general case, and this
shall be done in a subsequent paper.

Note that we restrict our analysis, for simplicity, to the case of
irregularity index $1$. However, this is the critical case in
Washington's heuristics, and if the assumption of ``statistical
independence'' is close to reality\footnote{Washington mentions
  explicitly that this is the crucial and critical in the various
  heuristics of this kind.}, then the probability of failure of the
conjecture for higher values of $i(p)$ can only be smaller, so the
argument stays valid. 

\section{Proof of the Theorem}
We let $\K = \Q[\zeta]$ be the \nth{p} cyclotomic extension and $\K_n
= \K[ \zeta^{1/p^n} ], n \geq 1$, the \nth{p^n} extension.  The galois
groups are
\begin{eqnarray*}
  G = \Gal(\K/\Q) & = & \{ \sigma_a \ : \ a = 1, 2, \ldots p-1, \ \zeta \mapsto \zeta^a \} \cong \ZMs{p}, \\
  G_n = \Gal(\K_n/\Q) & = & G \times \lan \tau \ran, \quad \tau(\zeta_{p^n}) = \zeta_{p^n}^{1+p},  
\end{eqnarray*}
so $\tau$ generates $\Gal(\K_n/\K)$, in particular.  If $g \in \F_p$
is a generator of $\ZMs{p}$, then $\sigma = \sigma_g$ generates $G$
multiplicatively. We write $\jmath \in G$ for complex
multiplication. For $\sigma \in G$ and $R \in \{ \F_p, \Z_p, \ZM{p^m}
\}$ we let $\varpi(\sigma) \in R$ be the value of the Theichm\"uller
character on $\sigma$; for $R = \F_p$ we may also write $\hat{\sigma}$
for this values. The orthogonal idempotents $e_k \in R[ G ]$ are
\[e_k = \frac{1}{p-1} \sum_{a=1}^{p-1} \varpi^k(\sigma_a) \cdot
\sigma_a^{-1}. \] If $X$ is a finite abelian $p$ - group on which $G$
acts, then $e_k(\Z_p)$ acts via its approximants to the \nth{p^m}
order; we shall not introduce additional notations for these
approximants. A fortiori, complex conjugation acts on $X$ splitting it
in the canonical plus and minus parts: $X = X^+ \oplus X^-$, with $X^+
= X^{1+\jmath}, X^- = X^{1-\jmath}$. The units of $\K$ and $\K_n$ are
denoted by $E, E_n$ and the cyclotomic units by $C, C_n$. The Iwasawa
invariants $\lambda,\lambda^-$ are related to the cyclotomic
$\Z_p$-extension $\K_{\infty} = \cup_n \K_n$ and $A_n =
(\id{C}(\K_n))_p$ are the $p$-parts of the ideal class groups of
$\K_n$. They form a projective sequence with respect to the relative
norms $N_{m,n} = \Norm_{\K_m/\K_n}, m > n \geq 1$ and $\rg{A} =
\varprojlim_n A_n$. We also write $A$ for $A_1$. We shall write for
simplicity $A(\KL) = (\id{C}(\KL))_p$ for the $p$-part of the class
group of an arbitrary number field $\KL$, so $A = A(\K)$, etc.

We fix now an odd prime $p$ such that
\begin{itemize}
\item[ 1. ] Greenberg's conjecture holds for $p$, so $A^+$ is finite
  and $\lambda^+ = 0$.
\item[ 2. ] Vandiver's conjecture fails for $p$.
\item[ 3. ] There is a unique irregular index $2k$ such that $A_{p-2k} = \varepsilon_{p-2k} A \neq \{ 1 \}$. Additionally $A_{2k} \neq \{ 1 \}$, as a consequence of 2.
\end{itemize}

Under these premises, we show that $\zprk( \varepsilon_{p-2k} \rg{A} >
1$, which is the statement of the theorem. We prove the statement by
contraposition, so we assume that $\zprk(\varepsilon_{p-2k} \rg{A}) =
1$. Since there is a unique irregular index, the minimal polynomial of
$\rg{A}$ is linear. Let $\KH_n/\K_n$ be the maximal $p$-abelian
unramified extensions. They split in plus and minus parts according to
$A_n = A_n^+ \oplus A_n^-$ and our assumption implies that
$\KH_n^+/\K_n$ are cyclic extensions of degree
\[ d_n := [ \KH_n^+ : \K_n ] = | A_n^+ |. \] We may also consider
$\KH_n^+$ as the compositum of $\K_n$ with the full $p$-part of the
Hilbert class field of $\K_n^+\subset \K_n$, the maximal reals
subextension of $\K_n$: thus $\KH_n^+$ is a canonical subfield,
corresponding by the Artin map to $A_n^+$. It follows that
$\KH_n^+/\K_n^+$ is an abelian extension, and thus $\KH_n^+$ is a CM
field (see also \cite{Wa}, Lemma 9.2 for a detailed proof).  There is
a canonic construction of radicals from $A_n^-$, such that $\KH_{n}
\cdot \K_m \subset \K_m[ (A_m^-)^{1/p^m} ]$ for sufficiently large
$m$. As a consequence of Greenberg's conjecture holding for $\K$,
there is an $n_0 \geq 1$ such that $| A_n^+ | = | A_{n_0}^+ |$ for all
$n \geq n_0$ and for such $n$, let $a_n \in A_n^-$ generated this
cyclic group. Let $\eu{Q} \in a_n$ and $\alpha_0 \in \K_n^{\times}$
with $(\alpha_0) = \eu{Q}^{\ord(a_n)}$; there is an $\alpha = \eta
\cdot \alpha_0^{1-\jmath}, \eta \in \mu_{p^n}$ which is well defined
up to roots of unity, such that $\KH_n^+ \subset \K_n[ \alpha^{1/p^n}
]$. The radical $B_n$ of $\KH_n^+$ is then the multiplicative group
generated by $\alpha$ and $(\K_n^{\times})^{d_n}$.

Since $\KH_n^+/K_n$ is cyclic, a folklore result, which we prove for
completeness in Lemma \ref{nohil} of the Appendix below, implies that
\begin{eqnarray}
\label{hiltriv}
A(\KH_n^+) = (\id{C}(\KH_n^+))_p = \{ 1 \}. 
\end{eqnarray}

A classical result, proved by Iwasawa \cite{Iw} in a general
cohomological language, states that for an arbitrary galois extension
$\KL/\F$ of finite number fields, there is a canonical isomorphism
\begin{eqnarray}
\label{iwa}
 H^{1}(\Gal(\KL/\F), E(\KL)) \cong \id{A}(\KL) , 
\end{eqnarray}
where $\id{A}(\F)$ are Hilbert's \textit{ambig} ideals, i.e. the
ideals of $\KL$ which are invariant under $\Gal(\KL/\F)$, factored by
the principal ideals of $\F$. These can be either totally ramified
ideals or ideals from $\F$ that capitulate completely (become
principal) in $\KL$. We shall in the sequel often consider the
homology groups $H^0, H^1$ for the unit groups. We can then write, for
simplicity
\[ H^i(\KL/\F) := H^{i}(\Gal(\KL/\F), E(\KL)) \quad \hbox{for} \quad i
= 0, 1. \]

The isomorphism above restricts also to one of $p$-parts of the
respective groups; furthermore, complex conjugation also induces
canonical isomorphisms of the plus and minus parts of $H^i$. The
extensions $\KH_n^+/\K_n$ being cyclic of degree $d_n$, the Herbrand
quotient is $d_n$ and thus
\[ H^1 (\KH_n^+/\K_n) = d_n \cdot H^0(\KH_n^+/\K_n) . \] We claim that
$\left(H^0(\KH_n^+/\K_n)\right)^+ = \{ 1 \}$. Indeed, the ambig ideals
in an unramified extension are capitulated ideals. In our case, since
$d_n = | A_n^+ |$ by definition, we have exactly $| \id{A}(\KH_n^+) |
= d_n^2$. This follows from the fact that the plus part capitulates
completely, while the minus part is cyclic too and generates the
radical of the extension. Consequently $\eu{Q}^{\ord(a_n)/d_n}$
becomes principal in $\KH_n^+$, which confirms that
\[ | \id{A}(\KH_n^+) | = d_n^2 \quad \hbox{ and } | \id{A}(\KH_n^+)
|^- = | \id{A}(\KH_n^+) |^+ = d_n. \] Therefore, $| H^0(\KH_n^+/\K_n)
| = d_n$. The roots of unity $\zeta_{p^n} \not \in
\Norm_{\KH_n^+/\K_n}(E(\KH_n^+))$: indeed, if $\zeta_{p^m} =
N(\delta)$ for $\delta \in E(\KH_n^+)$ and $m \leq n$, then
$\varepsilon = \delta/\overline{\delta}$ is well defined in the CM
field $\KH_n^+$ and it is a root of unity, by Dedekind's unit Theorem
-- so $\varepsilon \in \K_n$. Moreover, we have
$\Norm_{\KH_n^+/\K_n}(\varepsilon) = \varepsilon^{d_n} =
\zeta_{p^m}^2$. Since $p$ is odd, it follows that
$\mu_{p^n}/\mu_{p^n/d_n} \subset H^0(\KH_n^+/\K_n)$; by comparing
orders of the groups, we conclude that
\[ H^0(\KH_n^+/\K_n) = \mu_{p^n}/\mu_{(p^n/d_n)} =
\left(H^0(\KH_n^+/\K_n)\right)^-. \] We have proved:
\begin{lemma}
\label{h0v}
Notations being like above,
\[ \left(H^0(\KH_n^+/\K_n)\right)^+ = \{ 1 \} .\]
In particular
\begin{eqnarray}
\label{nsur}
\Norm_{\KH_n^+/\K_n}(E^+(\KH_n^+)) = E^+(\K_n), 
\end{eqnarray}
where for a CM field $\F$ we write $E^+(\F) = \{ e \cdot \overline{e}
: e \in E(\F) \}$.
\end{lemma}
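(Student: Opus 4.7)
The plan is to pin down $\lvert H^0(\KH_n^+/\K_n) \rvert$ exactly by combining the Herbrand quotient with Iwasawa's ambig-ideal isomorphism \rf{iwa}, and then to exhibit an explicit cyclic subgroup of $(H^0)^-$ of exactly that size; the plus part then collapses for lack of room.

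First I would compute $|H^1(\KH_n^+/\K_n)| = |\id{A}(\KH_n^+)|$ via \rf{iwa}. Since $\KH_n^+/\K_n$ is unramified, every ambig ideal arises from a capitulated class of $\K_n$. On the plus side, the construction of $\KH_n^+$ as the compositum with the $p$-Hilbert class field of $\K_n^+$ forces $A_n^+$ to capitulate completely, contributing a cyclic factor of order $d_n$. On the minus side, the radical description together with $(\alpha_0) = \eu{Q}^{\ord(a_n)}$ shows that $\eu{Q}^{\ord(a_n)/d_n}$ becomes principal in $\KH_n^+$, yielding another factor of $d_n$. Hence $|H^1| = d_n^2$, and since the Herbrand quotient of a cyclic extension of degree $d_n$ equals $d_n$, one obtains $|H^0| = d_n$.

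Next I would exhibit that many elements of $(H^0)^-$ coming from roots of unity. Since $\mu_{p^n} \subset E^-(\K_n)$, it suffices to trace $\mu_{p^n}$ through $H^0 = E(\K_n)/\Norm_{\KH_n^+/\K_n}(E(\KH_n^+))$. Assuming $\zeta_{p^m} = \Norm_{\KH_n^+/\K_n}(\delta)$ for some $\delta \in E(\KH_n^+)$, I would form $\varepsilon = \delta/\overline{\delta}$, a unit of archimedean absolute value one in the CM field $\KH_n^+$, hence a root of unity by Dedekind, and already lying in $\K_n$; the resulting relation $\varepsilon^{d_n} = \zeta_{p^m}^2$ with $p$ odd then forces $\zeta_{p^m} \in \mu_{p^n/d_n}$ by a short $p$-adic valuation argument. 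Consequently $\mu_{p^n}/\mu_{p^n/d_n}$ embeds into $(H^0)^-$; since this subgroup has order $d_n = |H^0|$, it exhausts the whole cohomology group, and $(H^0)^+ = \{1\}$.

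Finally, splitting $H^0 = E(\K_n)/\Norm_{\KH_n^+/\K_n}(E(\KH_n^+))$ by the idempotents $(1 \pm \jmath)/2 \in \Z_p[G]$ (legitimate since $p$ is odd) converts the vanishing of $(H^0)^+$ into the equality $E^+(\K_n) = \Norm_{\KH_n^+/\K_n}(E^+(\KH_n^+))$, which is precisely \rf{nsur}. The main obstacle is securing $|\id{A}(\KH_n^+)| = d_n^2$ exactly: one must rule out any extra capitulation beyond the cyclic plus factor and the single minus factor generated by $\eu{Q}$. This rests on the plus-Hilbert field defining property of $\KH_n^+/\K_n^+$ and on the faithfulness of the radical construction $\KH_n^+ \subset \K_n[\alpha^{1/p^n}]$; once that count is secure, the rest is bookkeeping with roots of unity and a clean $\pm$-splitting.
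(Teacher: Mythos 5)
Your proposal reproduces the paper's own argument essentially step for step: the Herbrand quotient computation together with Iwasawa's isomorphism $H^1 \cong \id{A}(\KH_n^+)$ and the count $|\id{A}(\KH_n^+)| = d_n^2$ pin down $|H^0| = d_n$, and the $\varepsilon = \delta/\overline{\delta}$ Dedekind-root-of-unity argument exhibits $\mu_{p^n}/\mu_{p^n/d_n}$ filling up $(H^0)^-$, so $(H^0)^+$ must vanish. The one concern you flag at the end --- securing $|\id{A}(\KH_n^+)| = d_n^2$ exactly by ruling out additional capitulation --- is present in the paper too and is treated there with the same brevity, so your acknowledgment of it as the delicate point is apt rather than a deviation.
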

In our case, $E^+$ are the real units and the units of $\K_n^+$,
resp. $\KH(\K_n^+) \subset \KH_n^+$; the prime $p$ is odd and we are
interested in $p$-parts, so the implicit exponent $2$ in the above
definition has no further consequences: the norm is surjective on the
real units in our class field.  Then $\KH_{n+1}^+ = \K_{n+1} \cdot
\KH_n^+$ and we have a commutative diagram of fields. By computing
$H^0(\KH_{n+1}^+/\K_n)$ in two ways, over the intermediate field
$\K_{n+1}$ and respectively over $\KH_n^+$, we obtain from \rf{nsur}
that
\begin{eqnarray}
\label{heq}
\left(H^0(\K_{n+1}/\K_n)\right)^+ = \left(H^0(\KH^+_{n+1}/\KH^+_n)\right)^+ .
\end{eqnarray}
The core observation of this proof is
\begin{proposition}
\label{metacyc}
Let $\lambda_n = 1-\zeta_{p^n}$. Then the ramified prime $\wp_n =
(\lambda_n) \subset \K_n$ above $p$ splits totally in $\KH^+_n$ in
$p$-principal ideals. Moreover, if $\nu \in \Gal(\KH_n^+/\K_n)$ is a
generator of this cyclic group, then there is a prime $\pi_n \in
\KH_n^+$ with $\Norm_{\KH_n^+/\K_n}(\pi_n) = \lambda_n$.
\end{proposition}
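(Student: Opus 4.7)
The plan is to establish three claims in turn: (i) that $\wp_n$ splits completely in $\KH_n^+$, (ii) that each prime of $\KH_n^+$ above $\wp_n$ is principal, and (iii) that a generator of one such prime can be chosen so that its norm equals $\lambda_n$ exactly. I would approach (i) via the Artin map: since $\KH_n^+$ corresponds to $A_n^+$, the symbol of a prime $\wp$ of $\K_n$ in $\Gal(\KH_n^+/\K_n) \cong A_n^+$ is the image of $[\wp]$ under the projection $A_n \to A_n^+$. But $\wp_n = (\lambda_n)$ is principal, so $[\wp_n] = 1$ and its Artin symbol is trivial; hence $\wp_n$ splits completely into $d_n$ primes $\eu{P}_1,\ldots,\eu{P}_{d_n}$, transitively permuted by $\langle \nu \rangle$. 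Claim (ii) then follows at once from \rf{hiltriv}: since $A(\KH_n^+) = \{1\}$, every ideal of $\KH_n^+$ is principal.

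For (iii) I would invoke Hasse's norm theorem for the cyclic extension $\KH_n^+/\K_n$. The local norm conditions check as follows: at $\wp_n$, the complete splitting identifies the local extension with $\prod_{i=1}^{d_n} (\K_n)_{\wp_n}$ and the local norm is the product map, hence surjective; at every other finite place the extension is unramified and $\lambda_n$ is a local unit, so the standard surjectivity of the norm on local units for unramified extensions applies; at the infinite places both fields are totally complex (since $\KH_n^+$ is CM), so the local extensions are trivial. Hence some $\pi' \in \KH_n^+$ satisfies $\Norm_{\KH_n^+/\K_n}(\pi') = \lambda_n$. Since $(\pi')$ has ideal norm $\wp_n = \prod_i \eu{P}_i$, we must have $(\pi') = \prod_i \eu{P}_i^{a_i}$ with $\sum a_i = 1$. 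Fixing a generator $\pi$ of $\eu{P}_1$ (which exists by (ii)), so that $\nu^{i-1}(\pi)$ generates $\eu{P}_i$, we can write $\pi' = u \prod_i \nu^{i-1}(\pi)^{a_i}$ for some $u \in E(\KH_n^+)$. Taking norms and using $\Gal$-invariance, $\lambda_n = \Norm(\pi') = \Norm(u)\,\Norm(\pi)^{\sum a_i} = \Norm(u)\Norm(\pi)$, so $\pi_n := u\pi$ is a generator of the prime $\eu{P}_1$ with $\Norm(\pi_n) = \lambda_n$.

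The hard part is (iii); (i) and (ii) are essentially tautologies given the hypotheses and \rf{hiltriv}. Producing a prime element of norm exactly $\lambda_n$ (rather than just an associate) amounts to showing the unit $w := \Norm(\pi)/\lambda_n$ lies in $\Norm(E(\KH_n^+))$, and Hasse--Minkowski provides the cleanest route, provided one genuinely verifies the local condition at the ramified prime $\wp_n$ (which is handled by the complete splitting from (i)). A more intrinsic alternative, possibly better suited to the generalizations announced in the introduction, would be to combine Lemma \ref{h0v} --- which already gives $(H^0(\KH_n^+/\K_n))^+ = \{1\}$ --- with a direct argument that $w$ lies in the plus part $E^+(\K_n)$; then $w$ would be a norm without appealing to a local-global principle.
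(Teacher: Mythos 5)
Your steps (i) and (ii) coincide with the paper's opening moves, except that you correctly invoke the Artin map where the paper cites the ``Principal Ideal Theorem'' (the splitting of a principal prime in an unramified abelian extension is Artin reciprocity; the Hauptidealsatz is about capitulation, which is used at a different point). For step (iii) you diverge genuinely: you derive the existence of $\pi'$ with $\Norm_{\KH_n^+/\K_n}(\pi')=\lambda_n$ from the Hasse norm theorem by checking local conditions at every place, whereas the paper passes to $p$-units $E'(\KH_n^+)$ and argues via $H^0(\Gal(\KH_n^+/\K_n),E')=H^0(\KH_n^+/\K_n)$ combined with the $H^0$-computation of Lemma~\ref{h0v}. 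The Hasse route is self-contained and cleanly rigorous; the paper's variant is more intrinsic but (as written) compressed. Your closing remark --- that one could instead verify the defect $w$ lies in $E^+(\K_n)$ and then apply $(H^0)^+=\{1\}$ --- is essentially the paper's intended mechanism.

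There is one genuine gap in your passage from $\pi'$ to $\pi_n$. You assert that $\Norm((\pi'))=\wp_n$ forces $(\pi')=\prod_i\eu{P}_i^{a_i}$. This does not follow: $(\pi')$ may carry extra factors $\eu{B}$ prime to $p$ with trivial ideal norm (e.g. $\eu{B}=\eu{Q}^{\nu-1}$ for any split prime $\eu{Q}$). The repair uses only tools already available. The ideal group $I(\KH_n^+)$ is a direct sum of $\Z[\lan\nu\ran]$-modules induced from subgroups, so $\hat{H}^{-1}(\lan\nu\ran,I(\KH_n^+))=0$ and $\Ker(\Norm)=I(\KH_n^+)^{\nu-1}$; hence $\eu{B}=\eu{C}^{\nu-1}$. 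Since $A(\KH_n^+)=\{1\}$ by \rf{hiltriv}, $\eu{C}=(\gamma)$ is principal, and $\Norm(\gamma^{\nu-1})=1$. Replacing $\pi'$ by $\pi'/\gamma^{\nu-1}$ preserves the norm and removes $\eu{B}$, after which your unit-adjustment argument producing $\pi_n=u\pi$ with $\Norm(\pi_n)=\lambda_n$ and $(\pi_n)=\eu{P}_1$ goes through as stated.
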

\begin{proof}
  Since $\wp_n = (\lambda_n)$ is principal, the Principal Ideal
  Theorem implies that it splits completely in the unramified
  extension $\KH_n^+/\K_n$ and since $A(\KH_n^+) = \{ 1 \}$ by
  \rf{nohil}, the primes above $\wp_n$ are $p$-principal. Let $E'(\F)$
  denote the $p$-units of the number field $\F$, i.e. the units of the
  smallest ring containing $E(\K)$ and in which all the primes above
  $p$ are invertible. In particular $E_n' = E_n[1/\lambda_n]$; it is
  customary to denote by $A'_n$ the $p$-part of the ideal class group
  of the $p$-integers. Since $\KH_n^+/\K_n$ splits the prime above $p$
  and $A_n^+ = (A'_n)^+$ and $H^0(\Gal(\KH_n^+/\K_n), E'(\KH_n^+)) =
  H^0(\KH_n^+/\K_n)$. In particular, the norm $\Norm_{\KH_n^+/\K_n} :
  E'(\KH_n^+) \ra E'(\K_n)$ is surjective, so there is a prime $\pi_n
  \in \KH_n^+$ mapping on $\lambda_n$.

  The proof of this proposition is made particularly simple by the use
  of \rf{nohil}. However, a more involved proof shows that the facts
  hold in more generality and the primes above $\lambda$ are principal
  in any subfield of the Hilbert class field $\KH_n$.
\end{proof}

As a consequence of the proposition, we see that $\id{A}(\KH_n^+) = \{
1 \}$. Indeed, by Lemma \ref{nohil}, the class group $A(\KH_n^+) = \{
1 \}$, and the only primes that ramify in $\KH_{n+1}^+/\KH_n^+$ lay
above $p$, so they are principal by Lemma \ref{metacyc}. There are
consequently no real ambig ideals in $\KH_n^+$. On the other hand, for
$n$ such that $| A^+_n | = | A_{n+1}^+ |$, the capitulation kernel
$P_n := \Ker(\iota_{n,n+1} : A_n^+ \ra A_{n+1}^+)$ is an $\F_p$-space
of dimension $d = \prk(\rg{A}^+) = \prk(A_n) = 1$.  We obtain a
contradiction with \rf{heq}, which shows that if $i(p) = 1$, then
either $\zprk(A^-) > 1$ or $h_p^+ = 1$. This completes the proof of
the Theorem.

\section{Three detailed variants of the proof}
Let $m$ be the first index for which $| A_m^(\K) | = | A_{m+1}^+(\K) | = | A^+(\K) |$.
Consider now $\KH_n^+ = \KH_m \cdot \K_n^+$ for $n \geq m$. Since $[\KH_n^+ : \K_n^+ ]
= q = | A(\K)^+ |$ for all $n > m$, we see that $\Ker( A^+(\KH_m) \ra A^+(\KH_{\infty}) = 1$.

Iwasawa's Theorem 12 in \cite{Iw1} implies that $H^1(\Gal(\KH_n/\KH_m), E'(\KH_n)) = 1$, and the fact that 
the primes above $p$ are principal leads by a computation of Herbrand quotients to 
$E(\KH_m) = N_{n,m}(E(\KH_n)) $ for all
$n > m$. We have 
\begin{eqnarray*}
 N_{\KH_n, \K^+_m}(E(\KH_n)) & = & N_{\KH_m/\K^+_m}(E(\KH_m)) = E(\K_m^+) \\
& = & N_{n,m}(N_{\KH_n/\K_n^+}(E(\KH_n)) = N_{n,m}(E(\K_n^+)). 
\end{eqnarray*}
It follows thus that $N_{n,m}(E(\K_n^+)) = E(\K_m^+)$ for all $n > m$. However, 
for sufficiently large $n$ we have $N_{n,m}(E(\K_n^+)) = C(\K_m^+)$ and this would require that 
$E(\K_m^+) = C(\K_m^+)$, which is the contradiction above.

We give below\footnote{I thank Hiroki Takahashi for some critical questions and an interesting dialogue,
which suggested that the details provided in this section may be useful for readers interested to understand
better the specific conditions of the \nth{p} cyclotomic extensions, compared for instance with some
real quadratic extensions in which Greenberg's conjecture is known to hold, yet $A^+ = p$ and $A^-$ is cyclic.}
 some additional detail about the units in $\KH_n$ which lead to two variants of this
proof and reveal interesting details.
\subsection{Metacyclotomic units}
Let $E(\KH_n)$ be the units of $\KH_n^+$ and $E(\K_n),
C(\K_n)$ the units, resp. cyclotomic units of $\K_n$. We shall
construct some norm coherent sequences of units in $E(\KH_n)$ and
relate them to the cyclotomic. For this, fix some large integer
$N$. We assume for simplicity that the primes above $p$ in $\KH_n$ are
principal and derive some specific units on base of this
assumption. Since we are only interested in $p$-parts, raising to a
power coprime to $p$ can be neglected.

We shall denote the norms $\Norm_{\KH_n/\KH_m} = \Norm_{\K_n/\K_m}$ by
$N_{n,m}$ and let $\id{N} = \Norm_{\KH_n/\K_n}$ for all $n$
sufficiently large. We choose $\pi_{2N} \in \KH_{2N}$ a prime with
$\id{N}(\pi_{2N}) = 1-\zeta_{p^{2N}}$ and let $\pi_n =
N_{2N,n}(\pi_{2N})$ for all $n < 2N$. The inertia group $I(\pi_{2N})
\cong \Gal(\K_{2N}/\Q)$.  We let $\sigma', \tau' \in I(\pi_{2N})$ be
lifts of $\sigma, \tau \in \Gal(\K_{2N}/\Q)$, the generators of the
subgroups with $p-1$ and $p^{2N-1}$ elements, respectively. We
identify $T = \tau'-1$ a lift of $T$ to $\Gal(\KH_{2N}/\KH_m)$, where
$m$ is an integer such that $| A^+(\K_{m}) | = | A^+(\K_{m+1}) | = q$,
say.

We fix an other large integer $M >> N$ and define the lifted idempotents 
\[ \varepsilon'_{j} = \frac{1}{p-1} \sum_{\varsigma \in < \sigma' > }
\omega^j(\varsigma) \varsigma^{-1} \in \Z[ \Gal(\KH_n/\Q) ], \] where
$\omega : < \sigma' > \ra \Z$ approximates the Teichm\"uller character
to the power $p^M$. We also let 
\[ \tilde{\varepsilon}_{j} = \frac{1}{p-1} \sum_{\varsigma \in < \sigma' > }
\varpi^j(\varsigma) \varsigma^{-1} \in \Z_p[ \Gal(\KH_n/\Q) ], \]
with $\varpi$ the $p$-adic Teichm\"uller character.

Then we define $E^{(2j)}(\K_n^+) = \varepsilon_{2j} E(\K_n^+)$, so 
$\tilde{\varepsilon}_2j(E(\K_n^+)/E^{p^M}(\K_n^+)) = E^{(2j)}(\K_n^+)/(E^{(2j)}(\K_n^+))^{p^M}$. 
Moreover, $m$ verifies for all $n > m$  
\begin{eqnarray}
\label{stabe}
E(\K_n) = C(\K_n) \cdot E(\K_m), \quad \forall n > m.
\end{eqnarray}
Indeed, this condition is equivalent to $[ E(\K_n) : C(\K_n) ] = |
A_n^+ | = [ E(\K_m) : C(\K_m) ] = | A_m^+ |$ for all $n > m$, a
condition which is implied by our assumption (e.g. Fukuda). It follows
also that $[ E(\K_l) : C(\K_l) ] = | A^+_l | = p | A_{l-1}^+ | $ for
$1 < l \leq m$, and if $\varepsilon_{l} \in e_{2k} E(\K_l)$ generates
this component up to \nth{p^M} powers, then
\[ E(\K_n) = \lan \varepsilon_m, \eta_n \ran_{\Z[ T ]} . \]

We show 
\begin{lemma}
\label{metac}
Notations being like above, there is a module $\overline{C}(\KH_n) \subset E(\KH_n)$, of finite index and norm coherent for all $n \leq 2 N$. Moreover, if $m$ is the smallest index such that
$| A_m^+ | = |A^+_{m+1} |$, then 
\begin{eqnarray}
\label{m}
\label{stab2}
E(\KH_n)  =  E(\KH_{m}) \cdot \overline{C}(\KH_n) \quad \forall n \geq m.
\end{eqnarray}
\end{lemma}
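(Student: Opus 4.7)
The plan is to construct $\overline{C}(\KH_n)$ by lifting the cyclotomic units of $\K_n$ through the norm-coherent primes $\pi_n$ supplied by Proposition \ref{metacyc}. Concretely, first fix the sequence $\pi_n = N_{2N,n}(\pi_{2N}) \in \KH_n^+$, as in the preceding paragraph, so that $\id{N}(\pi_n) = 1-\zeta_{p^n} = \lambda_n$ and the $\pi_n$ form a norm-coherent system in the tower $\KH_n/\KH_m$. The metacyclotomic elements are obtained by applying, at the level of $\KH_n$, the same Galois-theoretic combinations that yield the cyclotomic units $\eta_n \in C(\K_n)$ from $\lambda_n$: namely, quotients $\pi_n^{\sigma_a - 1}$ (together with the $(1-\jmath)$-projection, since we work with the minus part modulo real units), which are genuine elements of $E(\KH_n)$ because the support of $(\pi_n)$ lies above $p$ and is killed by $\sigma_a - 1$. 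The module $\overline{C}(\KH_n)$ is then the $\Z_p[\Gal(\KH_n/\Q)]$-submodule of $E(\KH_n)$ generated by these lifts together with the cyclotomic units $C(\K_n)$ of the base.

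Next I would verify norm coherence in $n$: since $N_{n+1,n}(\pi_{n+1}) = \pi_n$ by construction and the combinations used to pass from $\pi_n$ to the metacyclotomic generators commute with the norm $N_{n+1,n}$ (which is $\Gal(\KH_n/\Q)$-equivariant), one gets $N_{n+1,n}(\overline{C}(\KH_{n+1})) = \overline{C}(\KH_n)$. For the finite-index claim, I compute the Herbrand quotient of $E(\KH_n)$ relative to $\Gal(\KH_n^+/\K_n)$ as in the proof of the theorem, obtaining the value $d_n$ from Lemma \ref{h0v} and \rf{nsur}; this, combined with the classical index formula $[E(\K_n) : C(\K_n)] = |A_n^+|$ and the fact that the primes above $p$ are principal in $\KH_n^+$ (so the constructed units really lie in $E(\KH_n)$ and not just in the $p$-units), bounds $[E(\KH_n) : \overline{C}(\KH_n)]$ by a power of $p$ depending only on $q$.

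For the stability \rf{stab2}, I would combine two norm-surjectivities. On the base, Greenberg's conjecture plus Fukuda's criterion give \rf{stabe}, so $E(\K_n) = C(\K_n) \cdot E(\K_m)$ for all $n \geq m$. On the top, the norm-surjectivity $N_{n,m}(E(\KH_n)) = E(\KH_m)$ established just before this lemma, via Iwasawa's Theorem 12 of \cite{Iw1} together with the principality of the $p$-above primes, transfers units of $\KH_m$ up the tower. Lifting each cyclotomic generator of $E(\K_n)/E(\K_m)$ via the $\pi_n$ to a metacyclotomic generator in $\overline{C}(\KH_n)$, and then applying norm-surjectivity to absorb the remaining $\KH_m$-valued discrepancy into the $E(\KH_m)$ factor, yields $E(\KH_n) = E(\KH_m) \cdot \overline{C}(\KH_n)$.

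The main obstacle I anticipate is keeping the Galois-module structure tight enough that the lift actually covers all of $E(\KH_n)$ modulo $E(\KH_m)$, rather than only a finite-index subgroup. Equivalently, one must verify that no "new" real unit in $\KH_n^+$ escapes the image of $\overline{C}(\KH_n) \cdot E(\KH_m)$; this is where the minimality of $m$ with $|A_m^+| = |A_{m+1}^+| = q$ has to be used in an essential way, forcing simultaneous stabilization of the unit index and the class number from level $m$ upward, so that any discrepancy would contradict either \rf{stabe} or the Herbrand quotient computation for $\KH_n^+/\K_n$.
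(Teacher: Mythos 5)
Your construction of $\overline{C}(\KH_n)$ is missing an essential piece, and this gap propagates into both the finite-index claim and the proof of \rf{stab2}. The paper's $\overline{C}(\KH_n)$ is the product $C(\KH_n)\cdot \id{C}(\KH_n)$, where $C(\KH_n) = \eta_n^{\Z[\Gal(\KH_n/\Q)]}$ with $\eta_n = (\pi_n\overline{\pi_n})^{\sigma'-1}$ corresponds to what you describe, but where $\id{C}(\KH_n) = c_n^{\Z[s]}\cdot\delta_n^{\Z[T,s]}$ brings in a second, independent source of units: $c_n = \gamma_n^s$ with $\gamma_n$ a generator of the ideal $\eu{Q}_n\id{O}(\KH_n)$, $\eu{Q}_n$ a totally split prime in the class $a_n \in A_n^+$ that principalizes in $\KH_n$ by Lemma~\ref{nohil}. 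These are Hilbert's relative units (in the sense of his Theorem~91), and without them the $\Z$-rank of your module falls short of $\zrk(E(\KH_n))$: the module generated by $C(\K_n)$ and the $\pi_n$-lifts projects, under $\id{N}$, onto (essentially) $C(\K_n)$, but it does not fill out the ``vertical'' direction of $E(\KH_n)$ coming from $\Ker(\id{N})$. The paper establishes finite index not by a Herbrand-quotient bound --- which only controls $|H^1|/|H^0|$, not the index of a chosen submodule --- but by exhibiting the explicit generating system $\id{H}_n$, checking $|\id{H}_n| = \zrk(E(\KH_n))$, and applying Nakayama to $\overline{C}(\KH_n)/\overline{C}(\KH_n)^{p^M}$.

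The same omission undermines your argument for \rf{stab2}. From $\id{N}(e) = c\cdot\epsilon$ with $c\in C(\K_n)$, $\epsilon\in E(\K_m)$, and lifts $\tilde c \in \overline{C}(\KH_n)$, $\tilde\epsilon\in E(\KH_m)$ (via \rf{nsur}), you are left with $e/(\tilde c\tilde\epsilon) \in \Ker(\id{N})$, and it is exactly here that the proof needs the identification $\Ker(\id{N}) = E^s(\KH_n)\cdot\id{C}(\KH_n)$, which is only available once $\id{C}(\KH_n)$ is in the picture. The paper then iterates: write $e_{m'} = e\cdot w$ with $w$ in the kernel, reduce $w$ to an $s$-power, repeat, and conclude $e_{m'} \in E(\KH_m)\cdot\overline{C}(\KH_{m'})\cdot E(\KH_{m'})^{p^{M'}}$ for arbitrary $M'$, at which point finiteness of $[E(\KH_{m'}) : \overline{C}(\KH_{m'})]$ forces the claim. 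Your phrase ``applying norm-surjectivity to absorb the remaining $\KH_m$-valued discrepancy'' glosses over precisely this iteration and the kernel structure that drives it. You should add the relative units $c_n$, $\delta_n$ to the construction, prove the generating-system/Nakayama step explicitly for the finite index, and replace the absorption heuristic with the iterative descent through $\Ker(\id{N})$.
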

\begin{proof}
Let $\eta_n = (\pi_n \cdot \overline{\pi_n})^{\sigma'-1}$ with $\pi_n$ defined above, let
$\delta_n = (\pi_n \cdot \overline{\pi_n})^T$ and $C(\KH_n) =
\eta_n^{\Z[ \Gals(\KH_n/\Q) ]}$ be the $\Z$-module generated by
$\eta_n$, for all\footnote{We define the \textit{metacyclotomic} units
  only for $n < N$, in order to make sure that they are norms from a
  large extension above $\KH_n$. By avoiding the limit process $\cap_N
  N_{N,n}(E'(\KH_n))$ we obtain in addition explicit uniformizors
  $\pi_n$, which are norm coherent.}  $n < N$.  By definition,
$\id{N}(C(\KH_n)) = C(\K_n)$ and the modules $C(\KH_n)$ are norm
coherent. In particular, if $e \in C(\KH_l)$ then there is a unit $e'
\in C(\KH_{2N})$ with $e = N_{2N,l}(e')$.

Let further $(a_n)_{n \in \N} \in A^+_n \setminus (A_n^+)^p$ be a norm
coherent sequence of classes; thus $a_n^q = 1$ for all $n \geq m$ and
$q = \exp(A^+)$. Let $\eu{Q} \in a_{2N}$ be a totally split prime and
$\gamma \in \KH_{2N}$ be such that $\eu{Q} \id{O}(\KH_{2N}) =
(\gamma)$; if $\eu{Q}_n = N_{2N,n}(\eu{Q})$ and $\gamma_n =
N_{2N,n}(\gamma)$, then $\eu{Q}_n \id{O}(\KH_n) = (\gamma_n)$ for all
$n < 2N$. We let $c_n = \gamma_n^s \in E(\KH_n)$, a norm coherent
sequence of units for $n \leq 2N$. Let $\id{C}(\KH_n) = c_n^{\Z[ s ]}
\cdot \delta_n^{\Z[ T, s ]}$. Then
$\id{C}(\KH_n)$ also form a norm coherent sequence and we define
$\overline{C}(\KH_n) = C(\KH_n) \cdot \id{C}(\KH_n)$, which is a
further norm coherent sequence of units in $E(\KH_n)$. 

We note that $\Ker(\id{N} : E(\KH_n) \ra E(\K_n^+)) = E^s(\KH_n) \cdot \id{C}(\KH_n)$. Letting 
$\id{E}(\KH_n) = E(\KH_n)/( E^s(\KH_n) \cdot \id{C}(\KH_n))$, it follows from
\rf{nsur} that
\[ \id{E}(\KH_n) \cong E(\K_n), \] 
and the norm $\id{N}$ is an isomorphism between these two modules. As a consequence, 
if $c_0 \in \id{C}(\KH_n)$ generates this module and $e_{2j} \in E(\KH_n), j = 0, 1, \ldots, 
\frac{p-3}{2} $ have non trivial image in $\id{E}$ and map to a set of units 
$d_j ; j = 0, 1, \ldots, (p-3)/2$ which generate $E(K_n^+)$ as a $\Z[ T ]$-module and such 
that $d_j$ generates $\varepsilon_{2j} E(\K_n^+)/E(\K_n^+)^{p^M}$ for fixed, arbitrarily 
large $M > 0$, then 
\[  \lan c_0; e_0, \ldots, e_{(p-3)/2} \ran_{\Z[ T , s ]} = E(\KH_n). \]

We let $e_n \in E(\KH_n)$ be such that $\id{N}(e_n)$
generates $\varepsilon_{2j}E(\K_n)/E^{p^M}(\K_n)$ for some fixed, large $M$. 
We then consider the systems
\begin{eqnarray*}
  \id{H}_n & = & \left\{ c_n^{s^j} \ : \ j = 0, 1, \ldots, p-2 \right\} \bigcup \left\{ \delta_n^{s^j T^l} \ : \
    j = 0, 1, \ldots, p-1; l = 0, \ldots, p^{n-1}-1 \right\}
  \\ & & \bigcup \left\{ \varepsilon'_{2j} \eta_n^{s^i \cdot T^l} \ : \
    i = 0, 1, 2, \ldots, p-1; j = 1, 2, \ldots, \frac{p-3}{2}; l = 0, \ldots, p^{n-1}-1 \right\}, \\
  \overline{\id{H}}_n & = & \left\{ c_n^{s^j} \ : \ j = 0, 1, \ldots, p-2 \right\} \bigcup \left\{ \delta_n^{s^j T^l} \ : \
    j = 0, 1, \ldots, p-1; l = 0, \ldots, p^{n-1}-1 \right\}
  \\ & & \bigcup \left\{ (e_n^{(2j)})^{s^i \cdot T^l} \ : \
    i = 0, 1, 2, \ldots, p-1; j = 1, 2, \ldots, \frac{p-3}{2}; l = 0, \ldots, p^{n-1}-1 \right\}.
\end{eqnarray*}
One verifies that $| \id{H}_n | = \zrk(E(\KH_n))$; an application of
the Nakayama lemma to $\overline{C}(\KH_n)/\overline{C}(\KH_n)^{p^M}$
implies that the system is also a generating system for
$\overline{C}(\KH_n)$. Moreover $\overline{\id{H}}_n$ generates
$E(\KH_n)/E^{p^M}(\KH_n)$ for $n \leq m$.  These systems are
reminiscent of Hilbert's relative units in his Theorem 91.  It follows
that $\overline{C}(\KH_n)$ has finite index in $E(\KH_n)$ for all $n <
2N$.

Assume now that \rf{m} is false, and there is some $m' > m$
and $e_{m'} \in E(\KH_{m'}) \setminus E(\KH_{m}) \cdot
\overline{C}(\KH_{m'})$; we can assume without restriction of
generality that $e_{m'} \in E^{(2j)} (\KH_{m'})$ (see below for the definition
of the \textit{components} $E^{(2j)} (\KH_{m'})$), since at least one
component verifies the condition. Then, by \rf{stabe}
\[ \id{N}(e_{m'}) \in E(\K_{m}) \cdot \overline{C}(\K_{m'}) =
\id{N}(E(\KH_{m}) \cdot \overline{C}(\KH_{m'})). \] There is thus a
unit $e \in E(\KH_{m}) \cdot \overline{C}(\KH_{m'})$ such that $e_{m'}
= e \cdot w, w \in \Ker(\id{N} : E(\KH_{m'}) \ra E(\K_{m'}))$.  Since
$\id{C}_n$ are norm coherent and $(1-\varepsilon'_0) \Ker(\id{N} :
E(\KH_{m'}) \ra E(\K_{m'})) = (1-\varepsilon'_0) E(\KH_{m'})^s$, we
can assume that $w \in (1-\varepsilon'_0) E(\KH_{m'})$, so $w =
(e_{m'}')^s$ with $e_{m'}' \in (1-\varepsilon'_0) E(\KH_{m'})$. By
repeating the same argument, we find that $e_{m'}' = e'' \cdot w'$,
where $w' \in \Ker(\id{N} : E(\KH_{m'}) \ra E(\K_{m'}))$. It follows
by induction that $e_{m'} \in E(\KH_{m}) \cdot \overline{C}(\KH_{m'})
\cdot E(\KH_{m'})^{s^u}$ for any $u > 0$.  Since $s^p = p v, v \in
(\Z_p[ s ])^{\times}$, it follows that $e_{m'} \in E(\KH_{m}) \cdot
\overline{C}(\KH_{m'}) \cdot E(\KH_{m'})^{p^{M'}}$ for arbitrary $M' >
0$. But the index $[ E(\KH_{m'}) : \overline{C}(\KH_{m'}) ]$ is
finite, so the claim must be true.  Therefore $m$ is defined as the
smallest index after which $| A_n^+ |$ stabilizes 
\end{proof}

We obtain the following explicite variant of the first proof given above.
Since $E(\KH_m) \neq \overline{C}(\KH_m)$ while the quotient of these modules is finite, 
there is a unit $e \in E(\KH_m) \setminus \overline{C}(\KH_m)$, such that $e^p \in \overline{C}(\KH_m)$.
Consequently, there is a metacyclotomic unit $c_{m+1} \in \overline{C}(\KH_{m+1})$ such that
$N_{m+1,m}(c_{m+1}) = e^p$. Let $d = c_{m+1}/e \not \in \overline{C}(\KH_{m+1})$, by choice
of $e$. By definition, $N_{m+1,m}(d) = 1$ and thus Hilbert 90 implies that
$d = \gamma^{\omega_m}$ for some $\gamma \in \KH_{m+1}$. The ideal $(\gamma)$ is ambig, and since
$A^+(\KH_{m+1}) = 1$, it follows that $\gamma$ must be a product of principal ramified primes.
Thus $\gamma = \pi_{m+1}^x \cdot \delta$, for some $x \in \Z[ \Gal(\KH_{m+1}/\Q) ]$ and $\delta \in E(\KH_{m+1}$.
But then $\pi_{m+1}^{x \omega_{m+1}} \in \overline{C}(\KH_{m+1})$ by definition of $\overline{C}$.
Moreover, the structure identity \rf{m} implies that $\delta^{\omega_m} \in \overline{C}(\KH_{m+1})^{\omega_m} \subset
\overline{C}(\KH_{m+1})$. Altogether, we find that $d \in \overline{C}(\KH_{m+1})$ and thus
$e \in \overline{C}(\KH_{m+1})$ too. This contradicts the choice $e \not \in \overline{C}(\KH_m)$, given 
the \rf{m}: units which are metacyclotomic in $\KH_{m+1}$ must be so already in $\KH_m$. Note that this
contradiction makes explicite the argument of Iwasawa used in the first proof.

\subsection{Local units and components}
We consider the structure of the local units $U(\K_n), U(\KH_n)$. For
the first, it is known that $U(\K_n^+) = \bigoplus_k \varepsilon_{2k}
x_{n}^{\Lambda}$ are $\Lambda$-cyclic modules, generated by some $x_n
\in U(\K^+_n)$ (e.g. \cite{Wa}, \S 8).  Since $p$ is totally split, we
also have $U(\KH_n) \cong (U(\K_n^+))^p$, as cartesian product.  Let
$\xi_n = (x_n, 1, 1, \ldots, 1)$ under the Chinese Remainder Theorem,
with the first component being the projection to $\KH_{n,\wp_n}$ and
$\wp_n$ a ramified prime above the initially chosen $\wp =
(\pi)$. Then $\tilde{\varepsilon}_{2j} \xi_n$ defines the $2j$-component of
$U(\KH_n)$ as a $\Lambda[ s ]$-module. Since $\nu$ acts transitively on 
the projections in $U(\KH_n)$, it follows that these modules are also \textit{canonically}
given by 
\begin{eqnarray}
\label{locun}
U_{2j} (\KH_n) = (U(\K_n^+))^p
\end{eqnarray}
where the right hand side is a cartesian of copies corresponding to the completions at the
various primes above $p$. The action of $\Gal(\K_n^+/\Q)$ by conjugation on $\nu$ implies
\begin{eqnarray*}
\nu^{\sigma} = \nu^{\varpi(\sigma)^{2k}}, \quad \nu^{\tau} = \nu^{p+1},
\end{eqnarray*}
with $\varpi$ being the Teichm\"uller character.
From this, one verifies without using the construction above, that $(U(\K_n^+))^p$ is a canonic $\Z_p[ \Gal(\KH/\Q) ]$-module which well defined by \rf{locun}. We say that $U_{2j}(\KH_n)$ is the \nth{2j} component of $U(\KH_n)$ and have
\[  U(\KH_n) = \bigoplus_{j=0}^{(p-3)/2} U_{2j}(\KH_n). \]

Let $\rg{N}_l = \cap_n N_{n,l}(U(\KH_n))$ for $l \geq 1$. Then class
field theory implies that $U(\KH_l)/\rg{N}_l \cong \prod
U^{(1)}(\Z_p)$ is the product of the $p$ copies of $U^{(1)}(\Z_p)$ in
$U(\KH_l)$.  Since $\Z_p \subset U_0(\K_n^+)$ it follows from the definition \rf{locun} that
$ U(\KH_l)/\rg{N}_l  \hookrightarrow U_0(\KH_n)$, and therefore
\begin{eqnarray}
\label{nl}
U_{2j}(\KH_m) \subset \rg{N}_m \quad \hbox{for all} \quad j \neq 0. 
\end{eqnarray}

We identify the units $E(\KH_n) $ with their \textit{diagonal embedding} $E(\KH_n)  \hookrightarrow U(\KH_n)$. Since $E(\KH_n)/E(\KH_n)^{p^M}$ is a $\Z_p$-module, 
we can use the components of the local units for defining the 
\textit{components} of global units by
\begin{eqnarray*}
E^{(2j)} (\KH_n) & = & E(\KH_n) \cap \left(U_{2j}(\KH_n) \cdot E(\KH_n)^{p^M} \right), \\
C^{(2j)}(\KH_n) & = & E^{(2j)} (\KH_n) \cap \overline{C}(\KH_n)
\end{eqnarray*}
By definition of $U_{2j}$, we have $E^{(2l)}(\KH_n) \cap E^{(2j)}(\KH_n) = E(\KH_n)^{p^M}$ 
for $l \neq j$.

Let now $e \in E^{(2k)}(\KH_m) \setminus
N_{m+1,m}(E^{(2k)}(\KH_{m+1}))$. Note that by \rf{stab2}, the
existence of such a unit is granted. But then, by applying $\id{N}$ we
deduce that $\varepsilon_{2k} E(\K_m) =
\varepsilon_{2k}C(\K_m)$. However, our initial assumption implies that
$\varepsilon_{2k} A^+_m \neq \{ 1 \}$ and the main conjecture thus
leads to $[ \varepsilon_{2k} E(\K_m) : \varepsilon_{2k}C(\K_m) ] \neq
1$. Therefore, we conclude that there is unit $e$ with the claimed
properties. The Hasse norm principle implies by the above that $e \in
N_{m+1,m}(\KH_{m+1}^{\times} )$. Let thus $x \in \KH_{m+1}^{\times}$
with $N_{m+1,m}(x) = e$.  Then $(x) = \eu{X}^{\omega_m}$, since $H^1(
\lan \nu \ran, I(\KH_n))$ vanishes for the ideals in the cyclic
extension $\KH_n/\K_n$.  Moreover, $\eu{X}$ is not $p$-principal:
otherwise, if $(t,p) = 1$ and $\eu{X}^t = (\xi)$ is principal, then
$x^t = \xi^{\omega_m} \cdot e_1$ for some unit $e_1 \in E(\KH_{m+1})$. If the
$u t ´+ vp = 1$ we have
\[ e = N_{m+1,m}(e^v \cdot \xi^{\omega_m} e_1) = N_{m+1,m}(e^v \cdot e_1) \in
N_{m+1,m}E(\KH_{m+1}), \] and we had assumed $e \not \in
N_{m+1,m}E(\KH_{m+1})$. Consequently $A(\KH_{m+1}) \neq \{ 1 \}$,
which is a contradiction to the Lemma \ref{nohil} that completes the
proof.

Note that universal norms behave differently in the positive components and in the zero component, the latter
being by \rf{nl} the one which localizes the norm residue defects along the cyclotomic $\Z_p$-extension of 
any base field.
This fact may indicate the major distinction between zero components and positive indexed components.

\section{Appendix}
For the sake of completeness, we give a proof of the following 
\begin{lemma}
\label{nohil}
Let $\K$ be a number field and $A$ be the $p$ - part of its class
group, while $\KH$ is the $p$ - part of its Hilbert class group. If
$A$ is cyclic, then $A(\KH) := \id{C}(\KH)_p = \{ 1 \}$.
\end{lemma}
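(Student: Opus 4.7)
The plan is to move one step further up the $p$-class field tower and invoke a standard fact about finite $p$-groups. Let $\KL$ denote the $p$-Hilbert class field of $\KH$, so by class field theory $\Gal(\KL/\KH) \cong A(\KH)$; it is this group that we must show is trivial.

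I would first check that $\KL/\K$ is Galois: since $\KH$ is canonically associated to $\K$ and $\KL$ is the maximal unramified abelian $p$-extension of $\KH$, the conjugates of $\KL$ under $\Gal(\overline{\K}/\K)$ are all contained in $\KL$. Since $\KL/\KH$ and $\KH/\K$ are both unramified $p$-extensions, the same holds for $\KL/\K$, and $G := \Gal(\KL/\K)$ is a finite $p$-group. Next, $\KH$ is the maximal abelian subextension of $\KL/\K$: any intermediate field abelian over $\K$ is an unramified abelian $p$-extension of $\K$, hence contained in $\KH$ by maximality. Translating this through the Galois correspondence, $\Gal(\KL/\KH) = [G,G]$ and $G^{\mathrm{ab}} \cong \Gal(\KH/\K) \cong A$, which is cyclic by hypothesis.

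The crucial observation is then that a finite $p$-group $G$ with cyclic abelianization is itself cyclic. This follows immediately from Burnside's basis theorem: the Frattini quotient $G/\Phi(G)$ is an $\F_p$-quotient of $G^{\mathrm{ab}}$, hence cyclic of order at most $p$; consequently a minimal generating system of $G$ has size $\dim_{\F_p}(G/\Phi(G)) \leq 1$, so $G$ is cyclic. A cyclic group is abelian, so $[G,G] = 1$, which gives $\KL = \KH$ and $A(\KH) = \{1\}$.

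I anticipate no genuine difficulty. This is essentially the simplest case of Furtw\"angler's theorem on $p$-class field towers, and every step is routine. The one place where care is warranted is the identification $[G,G] = \Gal(\KL/\KH)$, which relies precisely on the maximality of $\KH$ as an unramified abelian $p$-extension of $\K$; once this is granted, the proof reduces to one line of elementary $p$-group theory.
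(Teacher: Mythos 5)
Your proof is correct, and it takes a genuinely different route from the paper's. The paper argues directly with ideal classes: it picks a class $b \in A(\KH)$ outside $A(\KH)^{(s,p)}$ where $s=\sigma-1$ for a generator $\sigma$ of $\Gal(\KH/\K)$, uses Chebotarev to represent $b$ by a prime $\eu{Q}$ lying over a totally split prime $\eu{q}$ of $\K$, deduces via capitulation of the generating class $a$ that $\Norm_{\KH/\K}(b)=1$, and then invokes Furtw\"angler's Hilbert 90 theorem for ideal class groups to force $b \in A(\KH)^s$, a contradiction. Your argument instead climbs one more level of the $p$-class field tower: you set $\KL$ equal to the $p$-Hilbert class field of $\KH$, observe that $\KL/\K$ is a finite unramified Galois $p$-extension with $\Gal(\KL/\KH)=[G,G]$ (because $\KH$ is exactly the maximal subextension of $\KL$ abelian over $\K$), so that $G^{\mathrm{ab}}\cong A$ is cyclic, and then apply Burnside's basis theorem to conclude $G$ itself is cyclic, hence abelian, hence $[G,G]=1$ and $\KL=\KH$. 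Both proofs are valid. Yours is the more standard ``class field tower'' argument and is arguably lighter machinery: it replaces Furtw\"angler's genus-theoretic theorem on norm kernels of class groups (a non-trivial analytic/arithmetic result) with the purely group-theoretic Burnside basis theorem, plus nothing beyond the definition and functoriality of the Hilbert class field. The paper's argument, while more intricate, stays closer to the ideal-theoretic setting it uses elsewhere and exhibits the Chebotarev and Furtw\"angler ingredients explicitly. One small point worth spelling out in your writeup: the identification $\Gal(\KL/\KH)=[G,G]$ requires both inclusions, namely $\KL^{[G,G]}\subset \KH$ because any subextension of $\KL/\K$ abelian over $\K$ is an unramified abelian $p$-extension of $\K$, and $\KH\subset \KL^{[G,G]}$ because $\KH/\K$ is abelian; you state the first direction, and the second is immediate, so the step is sound.
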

\begin{proof}
  Since $A$ is cyclic, $\Gal(\KH/\K) \cong A$ is a cyclic group and
  the ideals of a generating class $a \in A$ are inert and become
  principal in $\KH$. Let $\sigma = \varphi(a) \in \Gal(\KH/\K)$ be a
  generator and $s = \sigma - 1$. Suppose that $b \in A(\KH) \setminus
  A(\KH)^{(s, p)}$ is a non trivial class and let $\eu{Q} \in b$ be an
  ideal above a rational prime $\eu{q} \subset \K$, which splits
  completely in $\KH/\K$: such a prime must exist, by Tchebotarew's
  Theorem. If $b' = [ \eu{q} ]$, then the order of $b'$ must be a
  power of $p$, since this holds for $\eu{Q}$; thus $b' \in A(\K) =
  \lan a \ran$. But we have seen that the ideals from $a$ capitulate
  in $\KH$, so $b = 1$, in contradiction with our choice. Therefore
  $b' = 1$ and thus $\Norm_{\KH/\K}(b) = 1$. Furtw\"{a}ngler's Hilbert
  90 Theorem for ideal class groups in cyclic extensions \cite{Fu} says
  that $\Ker(\Norm : A(\KH) \ra A(\K)) \subset A(\KH)^s$, and this
  implies that $b \in A(\KH)^s$, which contradicts the choice of $b$
  and completes the proof.
\end{proof}

\end{document}